\newtheorem{thm}{Theorem}[section]
\newtheorem{lem}[thm]{Lemma}
\theoremstyle{definition}
\newtheorem{defn}[thm]{Definition}
\theoremstyle{remark}
\numberwithin{equation}{section}
\def\proof{\textbf{Proof}.\,\,}
\def\endproof{\hfill$\Box$\\}
\def\XXint#1#2#3{{\setbox0=\hbox{$#1{#2#3}{\int}$}
  \vcenter{\hbox{$#2#3$}}\kern-.5\wd0}}
\begin{document}

\title[Basic cohomology group decomposition]
      {Basic cohomology group decomposition of K-contact 5-manifolds}

\author[J. Zhou; P. Zhu]{Jiuru Zhou; Peng Zhu}
\address[J. Zhou]{School of Mathematical Sciences, Yangzhou
University, Yangzhou, Jiangsu, China 225002.}
\email{\href{mailto: J.
Zhou<zhoujr1982@hotmail.com>}{zhoujr1982@hotmail.com}}

\address[P. Zhu]{School of Mathematics and Physics,
Jiangsu University of Technology, Changzhou,
Jiangsu, People¡¯s Republic of China 213001.}
\email{\href{mailto: P. Zhu<zhupeng2004@126.com>}{zhupeng2004@126.com}}

\subjclass[2000]{Primary 53D10; Secondary 53C25, 53D35}

\keywords{K-contact manifolds, Sasakian manifolds, basic cohomology}

\begin{abstract}
In this paper, we consider decompositions of basic degree 2 cohomology for a compact K-contact 5-manifold $(M,\xi,\eta,\Phi,g)$, and conclude the pureness and fullness of $\Phi$-invariant and $\Phi$-anti-invariant cohomology groups. Moreover, we discuss the decomposition of the complexified basic degree 2 cohomology group. This is an analogue problem when Draghici, Li and Zhang \cite{DLZ1} considered the $C^{\infty}$ pureness and fullness of $J$-invariant and $J$-anti-invariant subgroups of the degree 2 real cohomology group $H^2(M,\mathbb{R})$ of any compact almost complex manifold $(M, J)$.
\end{abstract}

\maketitle

\section{Introduction}
Donaldson \cite{Don} posed a question: for an almost complex structure $J$ on a compact 4-manifold $M$ which is tamed by a symplectic form $\omega$, is there a symplectic form compatible with $J$? In order to study this question, Li and Zhang \cite{LZ}, Draghici, Li and Zhang \cite{DLZ1,DLZ2} investigated the decomposition of the real degree two de Rham cohomology group $H^2(M,\mathbb{R})$, and introduced $J-$invariant and $J-$anti-invariant subgroups $H_J^{+}(M)$ and $H_J^{-}(M)$. $J$ is said to be $C^{\infty}$ pure if $H_J^{+}(M)\cap H_J^{-}(M)=\{0\}$, $C^{\infty}$ full if $H_J^{+}(M)+H_J^{-}(M)=H^2(M,\mathbb{R})$. Draghici, Li and Zhang \cite{DLZ1} concluded that for a 4 dimensional almost complex manifold $(M,J)$, $J$ is $C^{\infty}$ pure and full, i.e.:
$$H^2(M,\mathbb{R})=H^{+}_{J}(M)\oplus H^{-}_{J}(M).$$ Moreover, they consider the complexified cohomology group $H^2(M,\mathbb{C})=H^2(M,\mathbb{R})\otimes\mathbb{C}$, and get that if $J$ is integrable, $$H^2(M,\mathbb{C})=H^{1,1}_J\oplus H^{2,0}_J\oplus H^{0,2}_J.$$

For higher dimensional case, please refer to \cite{FT, TWZ} and references therein.

As we all know almost complex manifolds are always of even real dimension. For odd dimensional case, we can consider contact manifolds. In this paper, we consider the decomposition of degree 2 basic cohomology group $H^{2}_B(\mathcal{F}_{\xi})$ of a compact K-contact 5-manifold $(M,\xi,\eta,\Phi,g)$. There are two subgroups $H_{\Phi}^{+}$ and $H_{\Phi}^{-}$ of $H^{2}_B(\mathcal{F}_{\xi})$ which are called $\Phi$-invariant and $\Phi$-anti-invariant basic cohomology group respectively. $\Phi$ is defined to be $C^{\infty}$ pure if $H_{\Phi}^{+}\cap H_{\Phi}^{-}=\{0\}$, $C^{\infty}$ full if $H_{\Phi}^{+}+H_{\Phi}^{-}=H^2_B(\mathcal{F}_{\xi})$. We conclude that $\Phi$ is $C^{\infty}$ pure and full, i.e. Theorem \ref{main1}. Moreover, when $M$ is Sasakian, $\Phi$ is complex $C^{\infty}$ pure and full, i.e. Theorem \ref{main2}.

\section{5 dimensional K-contact manifolds}

Let us first recall some basic facts of K-contact and Sasakian manifolds. For details, please refer to \cite{BG,FHW}.

 Suppose $(M,\xi,\eta,\Phi,g)$ is a $2n+1$ dimensional compact K-contact manifold, here $\eta$ is the contact 1-form satisfying $\eta\wedge (d\eta)^n\not=0$ everywhere on $M$, $\xi$ is the Reeb vector field satisfying $\eta(\xi)=1$ and $\iota_{\xi}d\eta=0$, $\Phi\in End(TM)$ such that $\Phi\circ\Phi=-id+\xi\otimes\eta$. $(\xi,\eta,\Phi)$ is called an almost contact structure on $M$. $g$ is a Riemannian metric compatible with the almost contact structure $(\xi,\eta,\Phi)$ in the sense that $g(\Phi X,\Phi Y)=g(X,Y)-\eta(X)\eta(Y)$ and $g(X,\Phi Y)=d\eta(X,Y)$. The contact metric structure $\mathcal{S}=(\xi,\eta,\Phi,g)$ is called K-contact if $\xi$ is a Killing vector field of $g$, i.e., $L_{\xi}g=0$, where $L$ stands for the Lie derivative.

The Reeb vector field $\xi$ is often called the characteristic vector field and uniquely determines a 1-dimensional foliation $\mathcal{F}_{\xi}$ on $M$. The line bundle $L_{\xi}$ consists of tangent vectors that are tangent to the leaves of $\mathcal{F}_{\xi}$, and the contact subbundle $D$ is a codimension 1 subbundle of $TM$ whose fibers are the kernel of $\eta$. Then we have: $$TM=L_{\xi}\oplus D.$$

Consider the cone on $M$ as $C(M)=M\times \mathbb{R}^{+}$ with warped product metric $g_{C(M)}=\textrm{d}r^2+r^2g$. Let $\Upsilon=r\frac{\partial}{\partial r}$ be the Liouville vector field. For the almost contact structure $(\xi,\eta,\Phi)$ on $M$, an almost complex structure $J$ on $C(M)$ can be defined as a section of the endomorphism bundle of the tangent bundle $TC(M)$ satisfying:
$$JY=\Phi Y+\eta(Y)\Upsilon, J\Upsilon=-\xi.$$

$(\xi,\eta,\Phi)$ is said to be normal if the corresponding almost complex structure $J$ on $C(M)$ is integrable, and a normal contact metric structure $\mathcal{S}=(\xi,\eta,\Phi,g)$ on $M$ is called a Sasakian structure. Moreover, a pair $(M,\mathcal{S})$ is called a Sasakian manifold.

Suppose $(M,\xi,\eta,\Phi,g)$ is a K-contact manifold, a differential $p-$form $\alpha$ on $M$ is said to be basic if $$\iota_{\xi}\alpha=0, L_{\xi}\alpha=0.$$

We denote by $\Omega_B^p(\mathcal{F_{\xi}})$ the basic $p$-forms. It is easy to check that the exterior derivation $d$ takes basic forms to basic forms, so the subalgebra $\displaystyle{\Omega_B(\mathcal{F_{\xi}})=\oplus_{p}\Omega_B^p(\mathcal{F_{\xi}})}$ forms a subcomplex of the de Rham complex. Its cohomology ring $H^{\ast}_B(\mathcal{F}_{\xi})$ is defined to be the basic cohomology ring of $\mathcal{F}_{\xi}$. In the following we set $d_B=d|_{\Omega_B(\mathcal{F}_{\xi})}$. For any $\alpha\in \Omega_{B}^{p}(\mathcal{F_{\xi}})$, the transverse Hodge star operator $\bar{\ast}$ can be defined as follows:
$$\bar{\ast}\alpha=\ast(\eta\wedge\alpha)=(-1)^p \iota_{\xi}\ast\alpha.$$

The adjoint operator $\delta_B: \Omega_{B}^{p}(\mathcal{F_{\xi}})\rightarrow \Omega_{B}^{p-1}(\mathcal{F_{\xi}})$ of the basic differential operator $d_B$:
$$\delta_B=-\bar{\ast}d_B\bar{\ast}.$$

The basic Laplacian $\Delta_B$:
$$\Delta_B=d_B\delta_B+\delta_Bd_B.$$

Analogue to the Hodge decomposition of compact Riemannian manifolds, we have the transverse Hodge decomposition \cite{EKAH,KT,Ton}:
$$\Omega_{B}^{p}(\mathcal{F_{\xi}})=\mathcal{H}^p(\mathcal{F}_{\xi})\oplus \textrm{Im}(d_B)\oplus \textrm{Ker}(\delta_B),$$
where $\displaystyle{\mathcal{H}^p(\mathcal{F}_{\xi})}$ is the space of basic harmonic $p$-forms defined as the kernel of
$$\Delta_B: \Omega_{B}^{p}(\mathcal{F_{\xi}})\rightarrow \Omega_{B}^{p}(\mathcal{F_{\xi}}).$$
Specifically, for five dimensional K-contact manifold $(M,\xi,\eta,\Phi,g)$, one considers the contact subbundle $D$ with bundle metric $g_D$ induced by $g$. For simplicity, we still denote $g_D$ by $g$. The operators
$$\frac{1}{2}(\textrm{id}+\bar{\ast}), \frac{1}{2}(\textrm{id}-\bar{\ast})$$ induces a decomposition of the exterior bundle $\Lambda_D$ of $D$ by decompose any $\alpha$ into $\displaystyle{\frac{1}{2}(\alpha\pm\bar{\ast}\alpha)}$:
$$\Lambda^2_D=\Lambda^{+}_{g}\oplus\Lambda^{-}_{g}.$$
Denote by $\displaystyle{\Omega_{g}^{\pm}(\mathcal{F_{\xi}})}$ the relevant space of basic forms. Hence,
$$\Omega_{g}^{2}(\mathcal{F_{\xi}})=\Omega_{g}^{+}(\mathcal{F_{\xi}})\oplus\Omega_{g}^{-}(\mathcal{F_{\xi}}).$$ We call elements in $\Omega_{g}^{+}(\mathcal{F_{\xi}})$ and $\Omega_{g}^{-}(\mathcal{F_{\xi}})$ the basic self-dual and basic anti-self-dual forms. Moreover, $\Phi$ acts on the bundle of $\Lambda^2_{D}$ by $\alpha(\cdot,\cdot)\rightarrow\alpha(\Phi\cdot,\Phi\cdot)$, so we have the splitting by decomposition $\displaystyle{\alpha(\cdot,\cdot)=\frac{1}{2}[\alpha(\cdot,\cdot)\pm\alpha(\Phi\cdot,\Phi\cdot)]}$: $$\Lambda^2_D=\Lambda_{\Phi}^{+}\oplus\Lambda_{\Phi}^{-}.$$

We denote by $\Omega_{\Phi}^{+}(\mathcal{F}_{\xi})$ the space of $\Phi$-invariant basic 2-forms, $\Omega_{\Phi}^{-}(\mathcal{F}_{\xi})$ the space of $\Phi$-anti-invariant basic 2-forms. Then the $\Phi$-invariant and $\Phi$-anti-invariant basic cohomology groups can be defined as follows respectively:
\begin{eqnarray*}
&&H^{+}_{\Phi}(\mathcal{F}_{\xi})=\{ [\alpha]\in H^{2}_{\Phi}(\mathcal{F}_{\xi}) ~| ~\alpha\in\Omega_{\Phi}^{+}(\mathcal{F}_{\xi}) \};\\
&&H^{-}_{\Phi}(\mathcal{F}_{\xi})=\{ [\alpha]\in H^{2}_{\Phi}(\mathcal{F}_{\xi}) ~| ~\alpha\in\Omega_{\Phi}^{-}(\mathcal{F}_{\xi}) \}.
\end{eqnarray*}

For a basic form $\alpha$, we denote $\alpha_h$, $(\alpha)_{g}^\mp$ and $(\alpha)_{\Phi}^\pm$ the relevant basic harmonic, basic (anti-)self-dual and $\Phi$(-anti)-invariant part of $\alpha$ respectively.

With the notations of basic (anti-)self-dual forms, we have the following refined transverse Hodge decomposition:

\begin{lem}\label{hodge}
If $\alpha\in\Omega_{g}^{+}$ and $\alpha=\alpha_h+d_B\theta+\delta_B\Psi$, then $(d_B\theta)^{+}_{g}=(\delta_B\Psi)^{+}_{g}$ and $(d_B\theta)^{-}_{g}=-(\delta_B\Psi)^{-}_{g}$. In particular, $$\alpha-2(d_B\theta)^{+}_{g}=\alpha_h,$$ and $\displaystyle{\alpha+2(d_B\theta)^{-}_{g}=\alpha_h+2d_B\theta}$ is closed.
\end{lem}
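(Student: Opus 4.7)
The plan is to apply the transverse Hodge star $\bar{\ast}$ to the decomposition $\alpha=\alpha_h+d_B\theta+\delta_B\Psi$ and then invoke uniqueness of the transverse Hodge decomposition, using the hypothesis $\bar{\ast}\alpha=\alpha$. So the first step is to track what $\bar{\ast}$ does to each of the three summands. Since the transverse dimension is $4$, on basic $2$-forms one has $\bar{\ast}^2=\mathrm{id}$; combined with $\delta_B=-\bar{\ast}d_B\bar{\ast}$ this gives the two identities $\bar{\ast}\,\delta_B\Psi=-d_B(\bar{\ast}\Psi)\in\mathrm{Im}(d_B)$ and $\delta_B(\bar{\ast}d_B\theta)=-\bar{\ast}d_B\bar{\ast}\bar{\ast}d_B\theta=0$, so $\bar{\ast}d_B\theta\in\mathrm{Ker}(\delta_B)$. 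An analogous sign check shows $\bar{\ast}$ commutes with $\Delta_B$, hence $\bar{\ast}\alpha_h\in\mathcal{H}^2(\mathcal{F}_\xi)$.

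Next I would rewrite the self-duality condition as
\[
\alpha_h+d_B\theta+\delta_B\Psi \;=\; \alpha \;=\; \bar{\ast}\alpha \;=\; \bar{\ast}\alpha_h+\bar{\ast}d_B\theta+\bar{\ast}\delta_B\Psi,
\]
and match components against the transverse Hodge decomposition $\mathcal{H}^2\oplus\mathrm{Im}(d_B)\oplus\mathrm{Ker}(\delta_B)$. By the preceding paragraph, the three pieces on the right lie respectively in $\mathcal{H}^2$, $\mathrm{Ker}(\delta_B)$, and $\mathrm{Im}(d_B)$. Uniqueness then forces $\alpha_h=\bar{\ast}\alpha_h$, $d_B\theta=\bar{\ast}\delta_B\Psi$, and $\delta_B\Psi=\bar{\ast}d_B\theta$. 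In particular $\bar{\ast}(d_B\theta)=\delta_B\Psi$ and $\bar{\ast}(\delta_B\Psi)=d_B\theta$.

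From here the two claimed identities are immediate from the definitions $(\beta)_g^{\pm}=\tfrac{1}{2}(\beta\pm\bar{\ast}\beta)$: both $(d_B\theta)_g^+$ and $(\delta_B\Psi)_g^+$ equal $\tfrac12(d_B\theta+\delta_B\Psi)$, while $(d_B\theta)_g^-=\tfrac12(d_B\theta-\delta_B\Psi)=-(\delta_B\Psi)_g^-$. The ``in particular'' statements are then a direct substitution: $\alpha-2(d_B\theta)_g^+=\alpha_h+(d_B\theta+\delta_B\Psi)-(d_B\theta+\delta_B\Psi)=\alpha_h$, and $\alpha+2(d_B\theta)_g^-=\alpha_h+2d_B\theta$, which is $d_B$-closed because $\alpha_h$ is harmonic and $d_B^2=0$.

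The only real obstacle is the bookkeeping for $\bar{\ast}$: one must verify $\bar{\ast}^2=\mathrm{id}$ on basic $2$-forms (so that the formal adjoint identity $\delta_B=-\bar{\ast}d_B\bar{\ast}$ yields $[\bar{\ast},\Delta_B]=0$) and confirm the sign conventions so that $\bar{\ast}$ indeed swaps $\mathrm{Im}(d_B)$ and $\mathrm{Ker}(\delta_B)$ while preserving $\mathcal{H}^2(\mathcal{F}_\xi)$. Once this is in place, the argument is purely formal and parallels the standard $4$-dimensional Hodge/self-duality decomposition, transplanted to the transverse complex of the Reeb foliation.
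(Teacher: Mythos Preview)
Your argument is correct and follows essentially the same route as the paper: apply $\bar{\ast}$ to the transverse Hodge decomposition of $\alpha$, observe that $\bar{\ast}$ preserves harmonicity and interchanges the exact and coexact pieces, and then invoke uniqueness of the decomposition to obtain $\bar{\ast}d_B\theta=\delta_B\Psi$, from which the $(\cdot)_g^{\pm}$ identities and the two ``in particular'' statements follow by direct substitution. Your version is slightly more careful about the sign bookkeeping (e.g.\ $\bar{\ast}\delta_B\Psi=-d_B(\bar{\ast}\Psi)$ and $[\bar{\ast},\Delta_B]=0$), but the strategy and all essential steps coincide with the paper's proof.
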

\proof
By the basic Hodge decomposition: $\displaystyle{\alpha=\alpha_h+d_B\theta+\delta_B\Psi}$, there holds
$$\bar{\ast}\alpha=\bar{\ast}\alpha_h+\bar{\ast}d_B\theta+\bar{\ast}\delta_B\Psi.$$
Here $\bar{\ast}\alpha_h$ is harmonic, since $\Delta_B\bar{\ast}\alpha_h=\bar{\ast}\Delta_B\alpha_h=0$, and
$\bar{\ast}\delta_B\Psi=\bar{\ast}(\bar{\ast}d_B\bar{\ast})\Psi=d_B\bar{\ast}\Psi$. Hence, $\bar{\ast}\delta_B\Psi=d_B\theta$, and furthermore, $\bar{\ast}d_B\theta=\delta_B\Psi$. Then,
\begin{eqnarray*}
&&(d_B\theta)^{+}_{g}=\frac{1}{2}(\textrm{id}+\bar{\ast})(d_B\theta)=\frac{1}{2}(\textrm{id}+\bar{\ast})\bar{\ast}(\delta_B\Psi)
=(\delta_B\Psi)^{+}_{g};\\
&&(d_B\theta)^{-}_{g}=\frac{1}{2}(\textrm{id}-\bar{\ast})(d_B\theta)=\frac{1}{2}(\textrm{id}-\bar{\ast})\bar{\ast}(\delta_B\Psi)
=-(\delta_B\Psi)^{-}_{g}.
\end{eqnarray*}
Therefore,
\begin{eqnarray*}
\alpha&=&\alpha_h+(d_B\theta)^{+}_{g}+(d_B\theta)^{-}_{g}+(\delta_B\Psi)^{+}_{g}+(\delta_B\Psi)^{-}_{g}\\
&=&\alpha_h+2(d_B\theta)^{+}_{g}.
\end{eqnarray*}
Similarly, $\alpha+2(d_B\theta)^{-}_{g}=\alpha_h+2d_B\theta$.
\endproof

According to He \cite{He}, choose a coordinate $\{ x,y_1,y_2,y_3,y_4 \}$ such that, the frame $\{ e,e_1,e_2,\Phi e_1,\Phi e_2 \}$ is an adapted orthonormal frame. Its dual frame is $\{ \eta, \theta_1, \theta_2, \Phi\theta_1, \Phi\theta_2 \}$. Then $\omega=\frac{1}{2}d\eta=\theta_1\wedge\Phi\theta_1+\theta_2\wedge\Phi\theta_2$, and $\eta\wedge(\frac{1}{2}d\eta)^2=2\eta\wedge\theta_1\wedge\Phi\theta_1\wedge\theta_2\wedge\Phi\theta_2$ is twice volume form.

Since $\Phi\omega=\omega, \bar{\ast}\omega=\omega$, and we have the following equalities:
\begin{eqnarray*}
\Lambda^{+}_{\Phi}&=&span\{ \theta_1\wedge\Phi\theta_1, \theta_2\wedge\Phi\theta_2, \theta_1\wedge\theta_2+\Phi\theta_1\wedge\Phi\theta_2, \theta_1\wedge\Phi\theta_2-\Phi\theta_1\wedge\theta_2 \};\\
\Lambda^{-}_{\Phi}&=&span\{ \theta_1\wedge\theta_2-\Phi\theta_1\wedge\Phi\theta_2, \theta_1\wedge\Phi\theta_2+\Phi\theta_1\wedge\theta_2 \};\\
\Lambda^{+}_{g_D}&=&span\{ \theta_1\wedge\Phi\theta_1-\theta_2\wedge\Phi\theta_2, \theta_1\wedge\theta_2-\Phi\theta_1\wedge\Phi\theta_2, \theta_1\wedge\Phi\theta_2+\Phi\theta_1\wedge\theta_2 \};\\
\Lambda^{-}_{g_D}&=&span\{ \theta_1\wedge\Phi\theta_1-\theta_2\wedge\Phi\theta_2, \theta_1\wedge\theta_2+\Phi\theta_1\wedge\Phi\theta_2, \theta_1\wedge\Phi\theta_2-\Phi\theta_1\wedge\theta_2 \},
\end{eqnarray*}

there hold the following equalities:
\begin{eqnarray*}
&&\Lambda^{+}_{\Phi}=\mathbb{R}\omega\oplus\Lambda^{-}_{g}, \Lambda^{+}_{g}=\mathbb{R}\omega\oplus\Lambda^{-}_{\Phi};\\
&&\Lambda^{+}_{\Phi}\cap\Lambda^{+}_{g}=\mathbb{R}\omega, \Lambda^{-}_{\Phi}\cap\Lambda^{-}_{g}=0.
\end{eqnarray*}

We denote by $\mathcal{Z}^{-}_{\Phi}$ the set of closed $\Phi-$anti-invariant 2-forms, $\mathcal{H}^{+}_{g}$ the set of basic harmonic self-dual 2-forms, and $\mathcal{H}^{+,\omega^{\perp}}_{g}$ the set of basic harmonic self-dual 2-forms which are perpendicular to $\omega$ with respect to the metric induced by $g$. Then we have:
\begin{lem}
$\mathcal{Z}^{-}_{\Phi}\subset\mathcal{H}^{+}_{g}$, and $\mathcal{Z}^{-}_{\Phi}\subset H^{-}_{\Phi}$ is bijective. Furthermore, $H^{-}_{\Phi}=\mathcal{Z}^{-}_{\Phi}=\mathcal{H}^{+,\omega^{\perp}}_{g}$.
\end{lem}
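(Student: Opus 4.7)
The plan is to handle the three claims in sequence. For $\mathcal{Z}^{-}_{\Phi}\subset\mathcal{H}^{+}_{g}$, the table of pointwise decompositions just before the lemma gives $\Lambda^{-}_{\Phi}\subset\Lambda^{+}_{g}$, so any $\alpha\in\Omega^{-}_{\Phi}(\mathcal{F}_{\xi})$ satisfies $\bar{\ast}\alpha=\alpha$. If additionally $d_{B}\alpha=0$, then the relation $\delta_{B}=-\bar{\ast}d_{B}\bar{\ast}$ yields $\delta_{B}\alpha=-\bar{\ast}d_{B}\alpha=0$, so $\alpha$ is harmonic. For the bijection $\mathcal{Z}^{-}_{\Phi}\to H^{-}_{\Phi}$, $\alpha\mapsto[\alpha]$: surjectivity is essentially by definition, since a $\Phi$-anti-invariant representative of a class in $H^{2}_{B}(\mathcal{F}_\xi)$ must be $d_{B}$-closed, hence in $\mathcal{Z}^{-}_{\Phi}$; injectivity follows from the transverse Hodge decomposition, as an exact form in $\mathcal{Z}^{-}_{\Phi}$ is both harmonic (by the previous step) and $d_{B}$-exact, so it vanishes. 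This yields $H^{-}_{\Phi}=\mathcal{Z}^{-}_{\Phi}$.

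For $\mathcal{Z}^{-}_{\Phi}=\mathcal{H}^{+,\omega^{\perp}}_{g}$, the inclusion $\subset$ uses the previous steps together with the fact that $\Lambda^{-}_{\Phi}$ and $\mathbb{R}\omega$ are pointwise orthogonal within $\Lambda^{+}_{g}$: indeed, $\omega$ is a combination of the ``diagonal'' products $\theta_{i}\wedge\Phi\theta_{i}$, while the generators of $\Lambda^{-}_{\Phi}$ listed in the excerpt involve only ``off-diagonal'' products such as $\theta_{1}\wedge\theta_{2}\pm\Phi\theta_{1}\wedge\Phi\theta_{2}$ or $\theta_{1}\wedge\Phi\theta_{2}\pm\Phi\theta_{1}\wedge\theta_{2}$. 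Pointwise orthogonality then passes to $L^{2}$-orthogonality to $\omega$. For the reverse inclusion, take $\alpha\in\mathcal{H}^{+,\omega^{\perp}}_{g}$ and decompose pointwise via $\Lambda^{+}_{g}=\mathbb{R}\omega\oplus\Lambda^{-}_{\Phi}$ as $\alpha=f\omega+\beta$ with $\beta\in\Lambda^{-}_{\Phi}$; the aim is to show $f\equiv 0$, from which $\alpha=\beta$ is pointwise $\Phi$-anti-invariant, closed (being harmonic), and therefore lies in $\mathcal{Z}^{-}_{\Phi}$.

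The main obstacle is establishing $f\equiv 0$. If $\omega^{\perp}$ is read as pointwise orthogonality with respect to the metric $g$, this is immediate from the pointwise decomposition. Under an $L^{2}$-interpretation, orthogonality yields only $\int_{M} f\, \mathrm{vol}_{M}=0$, and further work is required: the equation $d_{B}\alpha=0$ becomes $d_{B}\beta=-df\wedge\omega$, and a direct computation on the orthonormal frame gives the transverse K\"ahler-type identity $\bar{\ast}(df\wedge\omega)=-\Phi\,df$, which implies $\delta_{B}\beta=-\Phi\,df$; combining these with integration by parts and the harmonicity $\Delta_{B}\alpha=0$ should force $df=0$, so $f$ is constant and then vanishes by the integral constraint.
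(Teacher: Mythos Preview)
Your argument matches the paper's in structure and is actually more complete. The paper proves $\mathcal{Z}^{-}_{\Phi}\subset\mathcal{H}^{+}_{g}$ exactly as you do (self-duality plus closedness gives $\delta_B\alpha=0$), and then simply asserts in one line that ``by $\Lambda^{+}_{g}=\mathbb{R}\omega\oplus\Lambda^{-}_{\Phi}$ and $\mathcal{Z}^{-}_{\Phi}=H^{-}_{\Phi}$'' the final equality follows, without spelling out either the bijection or the $\omega^{\perp}$ step. Your treatment of the bijection via transverse Hodge theory (a harmonic exact form vanishes) is the intended justification the paper omits.

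On the last equality, the paper's one-line deduction from the \emph{pointwise} splitting $\Lambda^{+}_{g}=\mathbb{R}\omega\oplus\Lambda^{-}_{\Phi}$ indicates that $\omega^{\perp}$ is meant pointwise; under that reading your argument is immediate and complete. Your additional sketch for the $L^{2}$ reading is extra credit and not part of the paper's proof; the one soft spot is the hedged ``should force $df=0$'', which you have not fully closed. If you want to finish that direction cleanly, note that since $\beta$ is self-dual one has $\delta_B\beta=-\bar{\ast}d_B\beta=\bar{\ast}(df\wedge\omega)$, and then pair $d_B\beta=-df\wedge\omega$ with itself to obtain $\|d_B\beta\|^2=\langle \beta,\delta_B(df\wedge\omega)\rangle$, which together with the transverse K\"ahler identity you cite yields $\|df\wedge\omega\|^2=0$, hence $df=0$. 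But again, this refinement is not something the paper itself supplies.
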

\proof
If $\alpha\in\mathcal{Z}^{-}_{\Phi}$, then $d\alpha=0$. Since $\alpha$ is self dual, i.e., $\bar{\ast}\alpha=\alpha$, $\delta_B\alpha=\bar{\ast}d_B\bar{\ast}\alpha=\bar{\ast}d_B\alpha=0$, i.e., $\alpha\in\mathcal{H}^{+}_{g}$.

By $\Lambda^{+}_{g}=\mathbb{R}\omega\oplus\Lambda^{-}_{\Phi}$ and $\mathcal{Z}^{-}_{\Phi}= H^{-}_{\Phi}$, we have $H^{-}_{\Phi}=\mathcal{Z}^{-}_{\Phi}=\mathcal{H}^{+,\omega^{\perp}}_{g}$.
\endproof

Based on the above lemmas, we conclude the following theorem:
\begin{thm}\label{main1}
For a five dimensional closed K-contact manifold $(M,\xi,\eta,\Phi,g)$, $\Phi$ is $C^{\infty}$ pure and full in the following sense:
$$H^2_{B}(\mathcal{F}_{\xi})=H^{+}_{\Phi}\oplus H^{-}_{\Phi}.$$
\end{thm}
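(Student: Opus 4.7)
The plan is to establish pureness ($H^{+}_{\Phi} \cap H^{-}_{\Phi} = \{0\}$) and fullness ($H^{+}_{\Phi} + H^{-}_{\Phi} = H^{2}_{B}(\mathcal{F}_{\xi})$) separately, relying on the pointwise identities between the bundles $\Lambda^{\pm}_{\Phi}$ and $\Lambda^{\pm}_{g}$ and the preceding lemma identifying $H^{-}_{\Phi}$ with $\mathcal{H}^{+,\omega^{\perp}}_{g}$.

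For fullness, I would take an arbitrary class $[\alpha] \in H^{2}_{B}(\mathcal{F}_{\xi})$ and pick its basic harmonic representative $\alpha$. Splitting it by the transverse Hodge star yields $\alpha = (\alpha)^{+}_{g} + (\alpha)^{-}_{g}$, and since $\bar{\ast}$ commutes with $\Delta_{B}$, each summand is again basic harmonic, in particular closed. The pointwise inclusion $\Lambda^{-}_{g} \subset \Lambda^{+}_{\Phi}$ (read off from the explicit bases) forces $(\alpha)^{-}_{g} \in \Omega^{+}_{\Phi}$, hence $[(\alpha)^{-}_{g}] \in H^{+}_{\Phi}$. For the self-dual part I would combine $\mathcal{H}^{+}_{g} = \mathbb{R}\omega \oplus \mathcal{H}^{+,\omega^{\perp}}_{g}$ with the previous lemma to write $(\alpha)^{+}_{g} = c\,\omega + \beta$, where $[\beta] \in H^{-}_{\Phi}$ and $c[\omega] \in H^{+}_{\Phi}$. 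Reassembling gives $[\alpha] = (c[\omega] + [(\alpha)^{-}_{g}]) + [\beta] \in H^{+}_{\Phi} + H^{-}_{\Phi}$.

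For pureness, suppose $[\alpha] \in H^{+}_{\Phi} \cap H^{-}_{\Phi}$. By the preceding lemma I may choose a $\Phi$-anti-invariant representative $\alpha$ which is simultaneously basic harmonic self-dual and $L^{2}$-orthogonal to $\omega$; a $\Phi$-invariant representative also exists, so $\alpha + d_{B}\beta \in \Omega^{+}_{\Phi}$ for some basic one-form $\beta$. Taking the $\Phi$-anti-invariant part of both sides gives $\alpha = -(d_{B}\beta)^{-}_{\Phi}$. I would then compute $\|\alpha\|^{2}_{L^{2}} = \int_{M} \alpha \wedge \alpha \wedge \eta$, which is correct because $\bar{\ast}\alpha = \alpha$ and $\eta \wedge \theta_{1} \wedge \Phi\theta_{1} \wedge \theta_{2} \wedge \Phi\theta_{2}$ is the volume form. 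The crucial pointwise fact is that $u \wedge v = 0$ for every $u \in \Lambda^{+}_{\Phi}$ and $v \in \Lambda^{-}_{\Phi}$, verifiable directly on the listed bases (conceptually, these correspond to the real $(1,1)$ and real $(2,0)+(0,2)$ pieces of the transverse Hermitian structure in complex transverse dimension two). Consequently $d_{B}\beta \wedge \alpha = (d_{B}\beta)^{-}_{\Phi} \wedge \alpha$, and $\|\alpha\|^{2} = -\int_{M} d_{B}\beta \wedge \alpha \wedge \eta$. A Stokes integration by parts, exploiting $d\alpha = 0$ and $d\eta = 2\omega$, converts this into $-2\int_{M} \beta \wedge \alpha \wedge \omega$; but $\beta \wedge \alpha \wedge \omega$ is a basic five-form on a foliated manifold with four-dimensional transverse geometry, hence identically zero. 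Thus $\|\alpha\|^{2} = 0$, so $\alpha = 0$ and $[\alpha] = 0$.

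The main obstacle is the pureness step, since it must couple three inputs: the pointwise vanishing $\Lambda^{+}_{\Phi} \wedge \Lambda^{-}_{\Phi} = 0$ for basic two-forms, the extra Stokes term $-\int \beta \wedge \alpha \wedge d\eta$ produced by $d\eta = 2\omega \neq 0$ in the contact setting, and the dimensional vanishing of basic five-forms in transverse dimension four. Fullness, by contrast, follows essentially for free from the fact that $\bar{\ast}$ commutes with $\Delta_{B}$.
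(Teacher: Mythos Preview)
Your pureness argument is correct and is essentially the one in the paper: both exploit the pointwise vanishing $\Lambda^{+}_{\Phi}\wedge\Lambda^{-}_{\Phi}=0$, integrate by parts against $\eta$, and kill the boundary term $\int\gamma\wedge\alpha\wedge d\eta$ by observing it is a basic $5$-form in transverse dimension $4$.

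Your fullness argument, however, has a genuine gap. You invoke a splitting $\mathcal{H}^{+}_{g}=\mathbb{R}\omega\oplus\mathcal{H}^{+,\omega^{\perp}}_{g}$ to write the harmonic self-dual part as $c\,\omega+\beta$ with $c$ a constant and $\beta$ harmonic and pointwise orthogonal to $\omega$. But the bundle identity $\Lambda^{+}_{g}=\mathbb{R}\omega\oplus\Lambda^{-}_{\Phi}$ is only pointwise: it decomposes a harmonic self-dual form $\gamma$ as $f\omega+\beta$ with $f=\langle\gamma,\omega\rangle$ a \emph{function}, and there is no reason for $f$ to be constant on a general K-contact $5$-manifold. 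Consequently neither $f\omega$ nor $\beta$ need be closed, so you cannot place $[\beta]$ in $H^{-}_{\Phi}$ directly. If instead you interpret $\mathcal{H}^{+,\omega^{\perp}}_{g}$ via $L^{2}$-orthogonality, the splitting holds trivially but $\beta$ is no longer pointwise in $\Lambda^{-}_{\Phi}$, and the identification $\mathcal{H}^{+,\omega^{\perp}}_{g}=H^{-}_{\Phi}$ from the preceding lemma (which is a pointwise statement) no longer applies.

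This is precisely the obstacle that Lemma~\ref{hodge} is designed to overcome. The paper's fullness proof takes a harmonic self-dual $\beta$ with $f=\langle\beta,\omega\rangle$ possibly nonconstant, applies Lemma~\ref{hodge} to the self-dual form $f\omega$ to produce a \emph{closed} $\Phi$-invariant form $f\omega+2\bigl((f\omega)^{\mathrm{exact}}\bigr)^{-}_{g}$ cohomologous to $(f\omega)_{h}$, and then pairs this against $\beta$ to derive a contradiction if $\beta$ were orthogonal to $H^{+}_{\Phi}\oplus H^{-}_{\Phi}$. So fullness is not ``essentially free'' here; it requires the refined transverse Hodge decomposition, and your sketch needs that extra step.
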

\proof
If $\mathfrak{a}\in H^{+}_{\Phi}\cap H^{-}_{\Phi}$, let $\alpha^{'}\in\mathcal{Z}^{+}_{\Phi}$, $\alpha^{''}\in\mathcal{Z}^{-}_{\Phi}$ be representatives for $\mathfrak{a}$, $\alpha^{'}=\alpha^{''}+d_B\gamma$ for some basic 1-form $\gamma$. Then
\begin{eqnarray*}
0&=&\int_M \alpha^{'}\wedge\alpha^{''}\wedge\eta\\
&=&\int_M (\alpha^{''}+d_B\gamma)\wedge\alpha^{''}\wedge\eta\\
&=&\int_M \alpha^{''}\wedge\alpha^{''}\wedge\eta+\int_M d_B\gamma\wedge\alpha^{''}\wedge\eta\\
&=&\int_M \alpha^{''}\wedge \alpha^{''}\wedge\eta+\int_M \gamma\wedge d_B\alpha^{''}\wedge\eta-\int_M \gamma\wedge\alpha^{''}\wedge d_B\eta\\
&=&\int_M \alpha^{''}\wedge \bar{\ast}\alpha^{''}\wedge\eta\\
&=&\int_M |\alpha^{''}|^2_{g}~\textrm{d}vol,
\end{eqnarray*}
since $\gamma\wedge\alpha^{''}\wedge d_B\eta$ is a basic 5-form, it is zero, and $\alpha^{''}$ is basic self-dual form, satisfies $\bar{\ast}\alpha^{''}=\alpha^{''}$.

Hence, $\alpha^{''}=0$, i.e., $\mathfrak{a}=0$.

Next, we prove $H^2(\mathcal{F}_{\xi})=H^{+}_{\Phi}\oplus H^{-}_{\Phi}$. Suppose the contrary, then there exists $\mathfrak{b}\in H^2(\mathcal{F}_{\xi})\setminus H^{+}_{\Phi}\oplus H^{-}_{\Phi}$. Since $H^{-}_g\subset H^{+}_{\Phi}$, assume $\mathfrak{b}\in H^{+}_g$. Let $\beta$ be the basic harmonic, self-dual representative of $\mathfrak{b}$, and denote $f=\langle\beta, \omega\rangle$. Then $f\not =0$. Otherwise, $\mathfrak{b}\in H^{-}_{\Phi}$. Consider the basic self-dual form $f\omega$. By Lemma \ref{hodge}, $(f\omega)_h+2(f\omega)^{exact}=f\omega+2[(f\omega)^{exact}]^{-}_g$ is closed and $\Phi$-invariant. Thus, $\mathfrak{c}=[(f\omega)_h+2(f\omega)^{exact}]\in H^{+}_{\Phi}$.  However,
\begin{eqnarray*}
&&\int \beta\wedge[(f\omega)_h+2(f\omega)^{exact}]\wedge \eta\\
&=&\int \langle\beta,(f\omega)_h+2(f\omega)^{exact}\rangle d\mu\\
&=&\int \langle\beta,f\omega+2((f\omega)^{exact})^{-}_{g}\rangle d\mu\\
&=&\int f^2 d\mu\\
&\not =&0.
\end{eqnarray*}

This contradicts the assumption that $\mathfrak{b}\bot H^{+}_{\Phi}\oplus H^{-}_{\Phi}$.

\endproof

\section{5 dimensional Sasakian manifolds}

We consider the complex basic 2-forms in this section. There holds the following decomposition:
$$\Lambda^2_{D, \mathbb{{C}}}=\Lambda^{2,0}_{\Phi}\oplus\Lambda^{1,1}_{\Phi}\oplus\Lambda^{0,2}_{\Phi}.$$

Let $\omega^i=\theta^{i}+\sqrt{-1}\Phi\theta^{i}$. Then:
\begin{eqnarray*}
(\Lambda^{1,1}_{\Phi})_{\mathbb{R}}&=&span\{ \sqrt{-1}\omega^1\wedge\overline{\omega}^1, \sqrt{-1}\omega^2\wedge\overline{\omega}^2,
\omega^1\wedge\overline{\omega}^2+\overline{\omega}^1\wedge\omega^2, \\
&&\sqrt{-1}(\omega^1\wedge\overline{\omega}^2-\overline{\omega}^1\wedge\omega^2) \},\\
(\Lambda^{2,0}_{\Phi}\oplus\Lambda^{0,2}_{\Phi})_{\mathbb{R}}&=&span\{ \omega^1\wedge\omega^2+\overline{\omega}^1\wedge\overline{\omega}^2,
\sqrt{-1}(\omega^1\wedge\omega^2-\overline{\omega}^1\wedge\overline{\omega}^2) \}.
\end{eqnarray*}
By a direct calculation we have:
\begin{eqnarray}
&&\Lambda^{+}_{\Phi}=(\Lambda^{1,1}_{\Phi})_{\mathbb{R}},\label{6}\\ &&\Lambda^{-}_{\Phi}=(\Lambda^{2,0}_{\Phi}\oplus\Lambda^{0,2}_{\Phi})_{\mathbb{R}}.\label{6}
\end{eqnarray}

\begin{defn}
Let $H^{p,q}_{\Phi}$ be the subspace of the complexified basic cohomology $H^2_{B}(\mathcal{F}_{\xi};\mathbb{C})=H^2_{B}(\mathcal{F}_{\xi};\mathbb{R})\otimes\mathbb{C}$, consisting of classes which can be represented by a complex closed form of type $(p,q)$.
\end{defn}

\begin{lem}
There hold the following properties of the subgroups $H^{p,q}_{\Phi}$:
\begin{eqnarray}
&&H^{p,q}_{\Phi}=\overline{H^{q,p}_{\Phi}};\label{1}\\
&&H^{p,p}_{\Phi}=(H^{p,p}_{\Phi}\cap H^{2p}_{B}(\mathcal{F}_{\xi};\mathbb{R}))\otimes\mathbb{C};\label{2}\\
&&(H^{p,q}_{\Phi}+H^{q,p}_{\Phi})=((H^{p,q}_{\Phi}+H^{q,p}_{\Phi})\cap H^{p+q}_{B}(\mathcal{F}_{\xi};\mathbb{R}))\otimes\mathbb{C}.\label{3}
\end{eqnarray}
\end{lem}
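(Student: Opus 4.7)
The three assertions are a standard package about complexified cohomology and should all follow from one observation: the conjugation operator on complex forms interacts well with the $(p,q)$-decomposition coming from $\Phi$. My plan is to establish (\ref{1}) first by a direct conjugation argument, and then derive (\ref{2}) and (\ref{3}) as formal consequences via the general fact that a complex subspace of a complexification that is closed under conjugation is itself the complexification of its real part.

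For (\ref{1}), I would first note that since $\Phi$ acts on basic $2$-forms as an almost complex structure on $D$, complex conjugation interchanges the type bundles $\Lambda^{p,q}_{\Phi}$ and $\Lambda^{q,p}_{\Phi}$ (visible from the explicit spans in the paper, since $\overline{\omega^i} = \theta^i - \sqrt{-1}\Phi\theta^i$). Complex conjugation also commutes with $d_B$ (because $d_B$ sends real forms to real forms) and preserves basic forms (since the basic conditions $\iota_\xi\alpha=0$ and $L_\xi\alpha=0$ are real). Hence if $\alpha$ is a closed basic form of type $(p,q)$ representing a class in $H^{p,q}_\Phi$, then $\bar\alpha$ is a closed basic form of type $(q,p)$ whose class is $\overline{[\alpha]}$. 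Thus the conjugation map on $H^{p+q}_B(\mathcal{F}_\xi;\mathbb{C})$ restricts to a (conjugate-linear) bijection $H^{p,q}_{\Phi}\leftrightarrow H^{q,p}_{\Phi}$.

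For (\ref{2}), observe that by (\ref{1}) the subspace $H^{p,p}_\Phi$ is closed under conjugation. Given any $\mathfrak{a}\in H^{p,p}_\Phi$, I would write
\begin{equation*}
\mathfrak{a}=\tfrac{1}{2}(\mathfrak{a}+\overline{\mathfrak{a}})+\sqrt{-1}\cdot\tfrac{1}{2\sqrt{-1}}(\mathfrak{a}-\overline{\mathfrak{a}}),
\end{equation*}
noting that both summands lie in $H^{p,p}_\Phi$ (it is a $\mathbb{C}$-subspace) and are fixed by conjugation, so they lie in $H^{p,p}_\Phi\cap H^{2p}_B(\mathcal{F}_\xi;\mathbb{R})$. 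This gives the inclusion $\subseteq$; the reverse inclusion is immediate. Statement (\ref{3}) follows by exactly the same argument once one observes that (\ref{1}) gives $\overline{H^{p,q}_\Phi+H^{q,p}_\Phi}=H^{q,p}_\Phi+H^{p,q}_\Phi$, so this sum is again closed under conjugation, and the same real/imaginary decomposition applies.

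No step looks like a real obstacle: the only substantive point is checking that conjugation, $d_B$, and the basic/type decomposition are mutually compatible, and this is transparent from the explicit frame $\omega^i=\theta^i+\sqrt{-1}\Phi\theta^i$ introduced just before the lemma. Once (\ref{1}) is in hand, assertions (\ref{2}) and (\ref{3}) are pure linear algebra over $\mathbb{R}\subset\mathbb{C}$.
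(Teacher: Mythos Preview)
Your proposal is correct and follows essentially the same approach as the paper: (\ref{1}) via the compatibility of conjugation with closedness and the type decomposition, and (\ref{2})--(\ref{3}) via the linear-algebra fact that a conjugation-invariant complex subspace of a complexification equals the complexification of its real part. The only difference is that the paper simply cites this linear-algebra fact from \cite{BHPV}, whereas you spell out the real/imaginary splitting explicitly.
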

\begin{proof}
Choose a complex form $\Psi$, then (\ref{1}) follows from the fact that $\Psi$ is closed if and only if its conjugate $\overline{\Psi}$ is closed. (\ref{2}) and (\ref{3}) follow from a fact in linear algebra:

Let $V$ be a real vector space and $W$ a complex subspace of $V\otimes_{\mathbb{R}}\mathbb{C}$, and $W$ is invariant under conjugation as a subspace. Then $W=(W\cap V)\otimes \mathbb{C}.$ See \cite{BHPV}.
\end{proof}

\begin{lem}\label{a}
For a compact 5-dimensional K-contact manifold, there hold the following:
\begin{eqnarray}
&&H^{+}_{\Phi}=H^{1,1}_{\Phi}\cap H^2(\mathcal{F}_{\xi};\mathbb{R});\label{4}\\
&&H^{1,1}_{\Phi}=H^{+}_{\Phi}\otimes_{\mathbb{R}}\mathbb{C}.\label{5}
\end{eqnarray}
\end{lem}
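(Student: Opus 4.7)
The plan is to build everything on top of the pointwise identity $\Lambda^{+}_{\Phi}=(\Lambda^{1,1}_{\Phi})_{\mathbb{R}}$ from (\ref{6}), which says that a real basic $2$-form is $\Phi$-invariant if and only if it is of type $(1,1)$. Once this is in hand, (\ref{4}) and (\ref{5}) reduce to standard averaging by complex conjugation.

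For the inclusion $H^{+}_{\Phi}\subseteq H^{1,1}_{\Phi}\cap H^{2}_B(\mathcal{F}_{\xi};\mathbb{R})$, I will start with a class $\mathfrak{a}\in H^{+}_{\Phi}$, choose a real $\Phi$-invariant closed representative $\alpha$, and note that by (\ref{6}) $\alpha$ is automatically of type $(1,1)$, which exhibits $\mathfrak{a}$ as lying in $H^{1,1}_{\Phi}$. For the reverse inclusion, take $\mathfrak{a}\in H^{1,1}_{\Phi}\cap H^{2}_B(\mathcal{F}_{\xi};\mathbb{R})$. By definition $\mathfrak{a}$ has a real closed representative $\beta$ and a closed $(1,1)$ representative $\alpha$, so $\beta-\alpha=d_B\gamma$ for some basic $1$-form $\gamma$. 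Taking complex conjugates and averaging yields
\[
\beta-\tfrac{1}{2}(\alpha+\bar{\alpha})=d_B\bigl(\tfrac{1}{2}(\gamma+\bar{\gamma})\bigr),
\]
so $\mathfrak{a}$ is represented by the real closed $(1,1)$-form $\tfrac{1}{2}(\alpha+\bar{\alpha})$; since the conjugate of a $(1,1)$-form is again of type $(1,1)$, this average lies in $(\Lambda^{1,1}_{\Phi})_{\mathbb{R}}=\Lambda^{+}_{\Phi}$ by (\ref{6}), and hence $\mathfrak{a}\in H^{+}_{\Phi}$. This proves (\ref{4}).

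For (\ref{5}), the cleanest route is to invoke (\ref{2}) from the previous lemma with $p=1$: the subspace $H^{1,1}_{\Phi}$ is stable under conjugation (being a $(1,1)$-class is a condition preserved by complex conjugation), so
\[
H^{1,1}_{\Phi}=\bigl(H^{1,1}_{\Phi}\cap H^{2}_B(\mathcal{F}_{\xi};\mathbb{R})\bigr)\otimes_{\mathbb{R}}\mathbb{C},
\]
and substituting (\ref{4}) gives $H^{1,1}_{\Phi}=H^{+}_{\Phi}\otimes_{\mathbb{R}}\mathbb{C}$. Alternatively one can do it by hand: write any complex $(1,1)$-class as $[\alpha]=[\tfrac{1}{2}(\alpha+\bar{\alpha})]+\sqrt{-1}\,[\tfrac{1}{2\sqrt{-1}}(\alpha-\bar{\alpha})]$, observe that both parts are real closed $(1,1)$-classes, and apply (\ref{4}) to each.

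The only delicate point is the averaging step in (\ref{4}): passing from an abstract $(1,1)$ representative of a real class to a real $(1,1)$ representative. This is the place where the proof is not purely pointwise — it requires that the cohomological relation $\beta-\alpha=d_B\gamma$ also passes to conjugates, which is immediate since $d_B$ is a real operator on basic forms. Once this is granted, both identities are formal consequences of (\ref{6}) and the conjugation-stability principle recorded in the preceding lemma.
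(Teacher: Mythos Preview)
Your argument is correct and is essentially the same as the paper's: both directions of (\ref{4}) are handled via the pointwise identity $\Lambda^{+}_{\Phi}=(\Lambda^{1,1}_{\Phi})_{\mathbb{R}}$ together with the conjugation-averaging trick $\tfrac{1}{2}(\alpha+\bar{\alpha})$ to produce a real closed $(1,1)$ representative, and (\ref{5}) is deduced from (\ref{2}) with $p=1$ combined with (\ref{4}). The only cosmetic difference is notation (the paper writes $\rho=\sigma+d_B\tau$ where you write $\beta-\alpha=d_B\gamma$).
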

\begin{proof}
We first prove (\ref{4}). By (\ref{6}) we have $H^{+}_{\Phi}\subseteq H^{1,1}_{\Phi}\cap H^2(\mathcal{F}_{\xi};\mathbb{R})$. For the converse inclusion, we choose an element $[\rho]\in H^{1,1}_{\Phi}\cap H^2(\mathcal{F}_{\xi};\mathbb{R})$, such that $\rho$ is a $d_B$ closed basic $(1,1)$ form of the form $\rho=\sigma+d_B\tau$, where $\sigma$ a $d_B$ closed basic real form. Hence, $[\rho]$ is also represented by the real $d_B$ closed basic $(1,1)$ form $\displaystyle{\frac{1}{2}(\rho+\overline{\rho})=\sigma+d_B(\frac{\tau+\overline{\tau}}{2})}$. This shows that $H^{+}_{\Phi}\supseteq H^{1,1}_{\Phi}\cap H^2(\mathcal{F}_{\xi};\mathbb{R})$.

The relation (\ref{5}) is a direct consequence of (\ref{2}) with $p=1$ and (\ref{4}).
\end{proof}

\begin{lem}\label{b}
 Suppose that $M$ is a compact 5-dimensional K-contact manifold. If the contact metric structure $\mathcal{S}=(\xi,\eta,\Phi,g)$ is normal, i.e., $(M,\mathcal{S})$ is Sasakian, there hold the following:
\begin{eqnarray}
&&(H^{2,0}_{\Phi}+H^{0,2}_{\Phi})=H^{-}_{\Phi}\otimes_{\mathbb{R}}\mathbb{C};\label{7}\\
&&H^{-}_{\Phi}=(H^{2,0}_{\Phi}+H^{0,2}_{\Phi})\cap H^2(\mathcal{F}_{\xi};\mathbb{R}).\label{8}
\end{eqnarray}
\end{lem}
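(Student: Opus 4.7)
The plan is to prove (\ref{8}) first and then derive (\ref{7}) from it by invoking the linear-algebra lemma already recalled in the previous proof, applied with $V = H^2_B(\mathcal{F}_{\xi};\mathbb{R})$ and $W = H^{2,0}_{\Phi} + H^{0,2}_{\Phi}$. The subspace $W$ is manifestly stable under conjugation (conjugation swaps the two summands), so the lemma yields $W = (W\cap V)\otimes\mathbb{C}$, which is exactly (\ref{7}) once (\ref{8}) identifies $W\cap V$ with $H^{-}_{\Phi}$. The Sasakian hypothesis enters only at the crucial step where a closed real form is split into transverse bidegree pieces and each piece is required to remain closed; normality makes the transverse almost-complex structure integrable, so $d_B = \partial_B + \bar{\partial}_B$ on complexified basic forms.

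For the inclusion $H^{-}_{\Phi}\subseteq (H^{2,0}_{\Phi}+H^{0,2}_{\Phi})\cap H^2_B(\mathcal{F}_{\xi};\mathbb{R})$, I start with a real closed $\Phi$-anti-invariant 2-form $\alpha$. By (\ref{6}) one may write $\alpha = \alpha^{2,0} + \alpha^{0,2}$ with $\alpha^{0,2} = \overline{\alpha^{2,0}}$. The key observation is that the transverse complex dimension is $2$, so transverse $(3,0)$- and $(0,3)$-forms vanish identically. Consequently $d_B\alpha^{2,0} = \bar{\partial}_B\alpha^{2,0}$ is of pure type $(2,1)$ and $d_B\alpha^{0,2} = \partial_B\alpha^{0,2}$ is of pure type $(1,2)$; since they sum to $d_B\alpha = 0$ and live in distinct bidegrees, both must vanish. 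Hence $\alpha^{2,0}$ and $\alpha^{0,2}$ are separately $d_B$-closed, and $[\alpha] \in H^{2,0}_{\Phi}+H^{0,2}_{\Phi}$.

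For the reverse inclusion I take a real class $\mathfrak{a}\in (H^{2,0}_{\Phi}+H^{0,2}_{\Phi})\cap H^2_B(\mathcal{F}_{\xi};\mathbb{R})$, write $\mathfrak{a} = [\gamma^{2,0}]+[\gamma^{0,2}]$, and average with its complex conjugate (using that the conjugate of a $d_B$-closed form is $d_B$-closed) to replace the representative by one of the form $\delta + \overline{\delta}$ with $\delta$ a $d_B$-closed basic $(2,0)$-form. Such a representative lies in $(\Lambda^{2,0}_{\Phi}\oplus\Lambda^{0,2}_{\Phi})_{\mathbb{R}} = \Lambda^{-}_{\Phi}$ by (\ref{6}), so $\mathfrak{a}\in H^{-}_{\Phi}$. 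This completes (\ref{8}), and (\ref{7}) follows from the linear-algebra lemma as noted above.

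The main obstacle is exactly the type-decomposition step: for a merely K-contact (non-normal) structure there is no transverse integrable complex structure and the splitting $d_B = \partial_B + \bar{\partial}_B$ is unavailable, so nothing forces the $(2,0)$ and $(0,2)$ pieces of a closed form to be individually closed. This is precisely why the statement is restricted to the Sasakian setting. The dimensional windfall that transverse $(3,0)$- and $(0,3)$-forms vanish is what makes the type-by-type separation of $d_B\alpha = 0$ immediate rather than requiring further vanishing arguments.
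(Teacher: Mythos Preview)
Your argument is correct and uses the same essential ingredients as the paper: the Sasakian hypothesis to write $d_B=\partial_B+\bar\partial_B$, the dimensional vanishing of transverse $(3,0)$ and $(0,3)$ forms, and the linear-algebra lemma (\ref{3}) linking the two displayed equalities. The only difference is order: the paper parametrizes $\Omega^{2,0}_{\Phi}$ by $\Theta=\alpha+i\Phi\alpha$ with $\alpha\in\Omega^{-}_{\Phi}$, shows $d_B\alpha=0\Leftrightarrow d_B\Theta=0\Leftrightarrow d_B(\Phi\alpha)=0$ to obtain (\ref{7}) directly, and then deduces (\ref{8}) from (\ref{3}); you instead prove (\ref{8}) by the two inclusions and then deduce (\ref{7}) from the same linear-algebra lemma. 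The two routes are logically equivalent and equally short.
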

\begin{proof}
Choose a complex form $\Theta=\alpha+i\Phi\alpha\in\Omega^{2,0}_{\Phi}$, where $\alpha\in\Omega^{-}_{\Phi}$. Since $d_B=\partial_B+\overline{\partial}_B$ and $2\alpha=\Theta+\overline{\Theta}$, we have $$2d_B\alpha=(\partial_B+\overline{\partial}_B)(\Theta+\overline{\Theta})=\partial_B\overline{\Theta}+\overline{\partial}_B\Theta.$$ Here we have used the fact that $\partial_B\Theta=0=\overline{\partial}_B\overline{\Theta}$, since $M$ is 5-dimensional and $\partial_B\Theta$ is a basic $(3,0)$ form, $\overline{\partial}_B\overline{\Theta}$ is a basic $(0,3)$ form. Therefore, $$d_B\alpha=0\Leftrightarrow\partial_B\overline{\Theta}=0\Leftrightarrow\overline{\partial}_B\Theta=0.$$
Similarly, $$d_B(\Phi\alpha)=0\Leftrightarrow\partial_B(\overline{i\Theta})=0\Leftrightarrow\overline{\partial}_B(i\Theta)=0.$$
Moreover, $\overline{\partial}_B\Theta=0$ if and only if $\overline{\partial}_B(i\Theta)=0$. Then it follows that $d_B\alpha=0$ if and only if $ d_B(\Phi\alpha)=0$. Therefore, $(H^{2,0}_{\Phi}+H^{0,2}_{\Phi})=H^{-}_{\Phi}\otimes_{\mathbb{R}}\mathbb{C}$.

The relation (\ref{8}) follows from (\ref{3}) with $(p,q)=(2,0)$ and (\ref{7}).
\end{proof}

Next we suppose the contact metric structure $\mathcal{S}=(\xi,\eta,\Phi,g)$. Combining with Lemma \ref{a} and Lemma \ref{b}, there holds the following:

\begin{thm}\label{main2}
For a compact 5-dimensional K-contact manifold $(M,\mathcal{S})$, $\Phi$ is always complex $C^{\infty}$ pure in the sense:
$$H^{1,1}_{\Phi}\cap H^{2,0}_{\Phi}\cap H^{0,2}_{\Phi}=\{0\}.$$
Moreover, if $\mathcal{S}$ is normal, then $\Phi$ is also complex $C^{\infty}$ full in the sense:
$$H^2(\mathcal{F}_{\xi};\mathbb{C})=H^{1,1}_{\Phi}\oplus H^{2,0}_{\Phi}\oplus H^{0,2}_{\Phi}$$
\end{thm}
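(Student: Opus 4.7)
The plan splits Theorem \ref{main2} into its two claims, complex pureness for K-contact and complex fullness for Sasakian, and reduces each to the real statement Theorem \ref{main1} via the type-to-reality bridges in Lemmas \ref{a} and \ref{b}. For complex pureness, I would take $\mathfrak{a} \in H^{1,1}_{\Phi} \cap H^{2,0}_{\Phi} \cap H^{0,2}_{\Phi}$. By the reality property \eqref{1}, the conjugate $\bar{\mathfrak{a}}$ also lies in the triple intersection, so both $\mathrm{Re}(\mathfrak{a})$ and $\mathrm{Im}(\mathfrak{a})$ are real classes in it. A real class in $H^{1,1}_{\Phi}$ lies in $H^{+}_{\Phi}$ by Lemma \ref{a}, while a real class in $H^{2,0}_{\Phi} + H^{0,2}_{\Phi}$ admits a representative of the form $\tfrac{1}{2}(\Theta + \bar{\Theta})$ with $\Theta$ closed of type $(2,0)$, which is manifestly closed and $\Phi$-anti-invariant, so the class lies in $H^{-}_{\Phi}$. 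Theorem \ref{main1} then forces $\mathrm{Re}(\mathfrak{a}) = \mathrm{Im}(\mathfrak{a}) = 0$, hence $\mathfrak{a} = 0$; only the K-contact hypothesis is used.

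For the complex full decomposition under the Sasakian hypothesis, I would first combine \eqref{5} and \eqref{7} with the real decomposition of Theorem \ref{main1} to obtain
\[
H^{2}_{B}(\mathcal{F}_{\xi}; \mathbb{C}) \;=\; (H^{+}_{\Phi} \oplus H^{-}_{\Phi}) \otimes_{\mathbb{R}} \mathbb{C} \;=\; H^{1,1}_{\Phi} \oplus (H^{2,0}_{\Phi} + H^{0,2}_{\Phi}).
\]
To upgrade this two-fold direct sum to the required three-fold direct sum, I need to verify $H^{2,0}_{\Phi} \cap H^{0,2}_{\Phi} = \{0\}$. Given $[\Theta]$ in that intersection, with closed $(2,0)$ representative $\Theta$ and closed $(0,2)$ representative $\Xi$, I would write $\Theta - \Xi = d_B \eta$ with $\eta = \eta^{1,0} + \eta^{0,1}$ and match bidegrees in $d_B = \partial_B + \bar{\partial}_B$; this yields $\Theta = \partial_B \eta^{1,0}$ and forces $\bar{\partial}_B \eta^{1,0}$ to be $d_B$-closed and $\partial_B$-exact. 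The transverse $\partial_B \bar{\partial}_B$-lemma for compact Sasakian manifolds then shows $\bar{\partial}_B \eta^{1,0}$ is $d_B$-exact, and a standard correction exhibits $\Theta$ itself as $d_B$-exact.

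The hard point is precisely this last step. The transverse $\partial_B \bar{\partial}_B$-lemma, or equivalently the Hodge type decomposition $\mathcal{H}^{k}_{B}(\mathcal{F}_{\xi}; \mathbb{C}) = \bigoplus_{p+q=k} \mathcal{H}^{p,q}_{B}$ of basic harmonic forms on a compact Sasakian manifold, is standard in the Sasakian literature but must be invoked explicitly; this is the one place where Sasakian integrability, as opposed to mere K-contact, is essential. If one prefers to avoid exactness manipulations, the type decomposition of basic harmonic forms delivers $H^{2,0}_{\Phi} \cap H^{0,2}_{\Phi} = \{0\}$ directly, since a single basic harmonic representative cannot be simultaneously of pure type $(2,0)$ and of pure type $(0,2)$ unless it vanishes.
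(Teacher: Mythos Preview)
Your reduction of both claims to Theorem~\ref{main1} via Lemmas~\ref{a} and~\ref{b} is exactly the paper's approach: the paper's entire proof of Theorem~\ref{main2} is the sentence ``Combining with Lemma~\ref{a} and Lemma~\ref{b}, there holds the following,'' and your argument for pureness and for the two-fold splitting $H^{2}_{B}(\mathcal{F}_{\xi};\mathbb{C})=H^{1,1}_{\Phi}\oplus(H^{2,0}_{\Phi}+H^{0,2}_{\Phi})$ is the natural unpacking of that sentence.

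Where you diverge is in the step $H^{2,0}_{\Phi}\cap H^{0,2}_{\Phi}=\{0\}$, which you flag as the ``hard point'' and propose to handle with the transverse $\partial_B\bar{\partial}_B$-lemma or the Sasakian Hodge type decomposition. Both work, but they are much heavier than what is actually needed, and neither is invoked (or even stated) in the paper. The elementary route, already implicit in Section~2, is this: by the pointwise identity $\Lambda^{-}_{\Phi}\subset\Lambda^{+}_{g}$ one has $\Lambda^{2,0}_{\Phi},\Lambda^{0,2}_{\Phi}\subset\Lambda^{+}_{g}\otimes\mathbb{C}$, so any $d_B$-closed form of pure type $(2,0)$ or $(0,2)$ is transversely self-dual and hence $\Delta_B$-harmonic (cf.\ the first line of the proof of Lemma~2.2). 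If a class lies in $H^{2,0}_{\Phi}\cap H^{0,2}_{\Phi}$, its $(2,0)$ and $(0,2)$ representatives are both harmonic and therefore coincide by uniqueness of the harmonic representative; a form that is simultaneously of type $(2,0)$ and $(0,2)$ vanishes. Note that this argument uses only the K-contact structure, not normality; the Sasakian hypothesis enters only through Lemma~\ref{b}, which is needed for $H^{2,0}_{\Phi}+H^{0,2}_{\Phi}=H^{-}_{\Phi}\otimes\mathbb{C}$.
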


\vspace{0.5cm}\noindent\textbf{Acknowledgments}.  The authors would like to thank professor Hongyu Wang for useful discussion. The first author was partially supported by NSFC Grant 11426195. The second author was partially supported by NSFC Grant 11471145 and Qing Lan Project.

\end {document}